\tikzset{Mylong/.style={text width=3.1cm, align=center}, myarr/.style={->, double equal sign distance, -implies}}
\newtheorem*{theorem*}{Theorem} 
\newtheorem{theorem}{Theorem}[section]
\newtheorem{lemma}[theorem]{Lemma}
\newtheorem*{lemma*}{Lemma}
\theoremstyle{definition}
\newtheorem{definition}[theorem]{Definition}
\newtheorem{example}[theorem]{Example}
\theoremstyle{remark}
\newtheorem*{remark}{Remark}
\title[infinite product of power series] {Infinite product of power series}
\author[K-Y. Chen]{Kui-Yo Chen}
\author[Z-T. Wu]{Zhong-Tang Wu}
\subjclass[2010]{05E18, 05A17, 05A19}
\subjclass[2020]{11B83, 05A17, 11P82, 05E18, 46N50}
\keywords{Infinite product. Power series. Bell polynomial. Binomial Theorem. Double counting.}
\address{Department of Mathematics, National Cheng Kung University, No. 1, Dasyue Rd., Tainan City 70101, Taiwan}
\email{r04221001@ntu.edu.tw}
\address{Department of Applied Physics, Tunghai University, No. 1727, Sec. 4, Taiwan Boulevard, Xitun District, Taichung City 407224, Taiwan}
\email{mpcsphilo@gmail.com}
\thanks{The work of Zhong-Tang Wu is supported by the MOST research grants under the budget numbers 110-2112-M-029-003, 108-2112-M-029-006-MY3. The work of Kui-Yo Chen is partially supported by the MOST research grants under the budget numbers 111-2112-M-029-007, 111-2115-M-006-005-MY2 and 110-2115-M-006-007.
}
\date{\today}
\begin{document}

\begin{abstract}
We give an exact coefficients formula of any infinite product of power series
with constant term equal to $1$,
by using structures from partitions of integers and permutation groups.
This is an universal theorem for various of Binomial-type theorems in many sense.
In particular, we give the new formulas as the double counting of Bell polynomial, Binomial Theorem and Multinomial Theorem.
\end{abstract}

\maketitle
\addcontentsline{toc}{section}{Title}
\baselineskip17pt

\section{Introduction} 
\label{sec:introduction}

Consider the infinite product of power series in $\lambda$,
\begin{align*}
\prod\limits_{n=1}^{\infty}(1+\sum\limits_{k=1}^{\infty} a_{n,k}\lambda^k)
=
&\quad(1+a_{1,1}\lambda+a_{1,2}\lambda^2+a_{1,3}\lambda^3+\cdots)\\[-10pt]
&\cdot\;(1+a_{2,1}\lambda+a_{2,2}\lambda^2+a_{2,3}\lambda^3+\cdots)\\
&\quad\quad\quad\quad\quad\quad
\vdots\\
&\cdot\;(1+a_{n,1}\lambda+a_{n,2}\lambda^2+a_{n,3}\lambda^3+\cdots)\\
&\quad\quad\quad\quad\quad\quad
\vdots
\end{align*}

We assume that each power series is analytic and the infinite product convergent uniformly,
so it has the power series expansion in $\lambda$.
\begin{equation}
\label{X_k}
\prod\limits_{n=1}^{\infty}(1+\sum\limits_{k=1}^{\infty} a_{n,k}\lambda^k)
=1+\sum\limits_{k=1}^{\infty} X_k\lambda^k
=1+X_1\lambda+X_2\lambda^2+X_3\lambda^3+\cdots.
\end{equation}
Consider the polynomial ring $\mathbb{Q}[S]$ with the generating set 
\begin{equation}
\label{generating_set}
S_{\{a_{n,k}\}_{n,k=1}^\infty}:=
\left\{
	\left.
		\sum\limits_{n=1}^{\infty}\prod\limits_{j =1}^{m}a_{n,k_j}
	\right|
	m \in \mathbb{N} \text{ and } k_j \in \mathbb{N}
\right\}.
\end{equation}
We will denote the set $S_{\{a_{n,k}\}_{n,k=1}^\infty}$ by $S$ as an abbreviation if there is no confusion.
In Section \ref{sec:introduction}-\ref{sec:some_notable_applications_in_finite_product_cases},
we assume that all forms
``$\sum\limits_{n=1}^{\infty}\prod\limits_{j =1}^{m}a_{n,k_j}$''
in $S$ converge.
In Section \ref{sec:convergence}, 
we provide some tricks to deal with the case that some forms in $S$ are divergent. 

In this paper,
we show that $X_k$ is in this ring $\mathbb{Q}[S]$.
Moreover, we obtain an explicit formula as our main result.
In particular, we see that each form in $S$ appears in $X_k$ for some $k$.

Here, we provide some low-degree terms.
\begin{align*}
X_1
&= \sum\limits_{n=1}^{\infty}a_{n,1}\\
X_2
&= \sum\limits_{n=1}^{\infty}a_{n,2}
+ \frac{1}{2}\left(\sum\limits_{n=1}^{\infty}a_{n,1}\right)^2
- \frac{1}{2}\sum\limits_{n=1}^{\infty}a_{n,1}^2\\
X_3
&
= \sum\limits_{n=1}^{\infty}a_{n,3}
+ \left(\sum\limits_{n=1}^{\infty}a_{n,1}\right)\left(\sum\limits_{n=1}^{\infty}a_{n,2}\right)
- \sum\limits_{n=1}^{\infty}a_{n,1}a_{n,2}\\
&
+ \frac{1}{6}\left(\sum\limits_{n=1}^{\infty}a_{n,1}\right)^3
- \frac{1}{2}\left(\sum\limits_{n=1}^{\infty}a_{n,1}\right)
\left(\sum\limits_{n=1}^{\infty}a_{n,1}^2\right)
+ \frac{1}{3}\sum\limits_{n=1}^{\infty}a_{n,1}^3.
\end{align*}

And this leads us to the Main Theorem (Theorem \ref{finalcoeff}).

\begin{theorem*}[Main Theorem, Theorem \ref{finalcoeff}]
Given $k\in\mathbb{N}$.
The coefficient $X_k$ we described in equation(\ref{X_k}) is
$$\sum\limits_{L \vdash k}
\frac{1}{C_{stb}(L)}
\sum\limits_{\sigma\in S_{\ell(L)}}
sign(\sigma)\mathbb{A}_{\sigma,L}.$$
\end{theorem*}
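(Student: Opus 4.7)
The plan is to expand the infinite product term by term, group the contributions by the partition $L\vdash k$ that records the multiset of exponents used, rewrite the resulting sums over distinct factor-indices in terms of the generators of $S$ via M\"obius inversion on the set-partition lattice, and then repackage the alternating sum over set partitions as a signed sum over permutations in $S_{\ell(L)}$ via cycle decomposition.

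First I would expand
$$\prod_{n=1}^{\infty}\Big(1+\sum_{k=1}^{\infty}a_{n,k}\lambda^k\Big)=1+\sum_{\ell\ge 1}\sum_{n_1<\cdots<n_\ell}\sum_{k_1,\ldots,k_\ell\ge 1}a_{n_1,k_1}\cdots a_{n_\ell,k_\ell}\,\lambda^{k_1+\cdots+k_\ell},$$
which is legitimate since only finitely many factor-choices contribute to any given power of $\lambda$. Reading off $[\lambda^k]$ and symmetrizing over orderings of the $n_i$ at the cost of $1/\ell!$ converts $\sum_{n_1<\cdots<n_\ell}$ into $\tfrac{1}{\ell!}\sum_{n_1,\ldots,n_\ell\text{ distinct}}$. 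Grouping the ordered compositions $(k_1,\ldots,k_\ell)$ by their underlying partition $L\vdash k$ then replaces $\tfrac{1}{\ell!}\sum_{k_1+\cdots+k_\ell=k}$ by $\sum_{L\vdash k}\tfrac{1}{C_{stb}(L)}$, since a partition with stabilizer of order $C_{stb}(L)$ has exactly $\ell!/C_{stb}(L)$ ordered realizations and the inner sum depends only on the multiset of parts.

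The central step is to rewrite $\sum_{n_1,\ldots,n_{\ell(L)}\text{ distinct}}\prod_i a_{n_i,L_i}$ in terms of the generators of $S$, which are precisely the sums $\sum_n\prod_j a_{n,k_j}$ forcing a common index. By M\"obius inversion on the lattice of set partitions of $\{1,\ldots,\ell\}$ (where $\ell=\ell(L)$),
$$\sum_{n_1,\ldots,n_\ell\text{ distinct}}\prod_{i=1}^\ell a_{n_i,L_i} \;=\; \sum_\pi \prod_{B\in\pi}\bigl((-1)^{|B|-1}(|B|-1)!\bigr)\Big(\sum_n\prod_{i\in B}a_{n,L_i}\Big),$$
where $\pi$ ranges over set partitions of $\{1,\ldots,\ell\}$. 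Each factor $\sum_n\prod_{i\in B}a_{n,L_i}$ is a generator of $\mathbb{Q}[S]$, so the right-hand side is already a polynomial in $S$.

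Finally I would trade set partitions for permutations. A block $B$ of size $b$ admits exactly $(b-1)!$ distinct cyclic orderings, and every $b$-cycle has sign $(-1)^{b-1}$; hence the M\"obius weight $\prod_B(-1)^{|B|-1}(|B|-1)!$ equals $\sum_\sigma\text{sign}(\sigma)$ as $\sigma$ ranges over permutations whose cycle support is $\pi$. Defining $\mathbb{A}_{\sigma,L}$ to be the product over the cycles $B$ of $\sigma$ of the generator $\sum_n\prod_{i\in B}a_{n,L_i}$, the double sum over $\pi$ and its cyclic realizations collapses to $\sum_{\sigma\in S_{\ell(L)}}\text{sign}(\sigma)\mathbb{A}_{\sigma,L}$, which combined with the outer $\sum_{L\vdash k}\tfrac{1}{C_{stb}(L)}$ yields the claimed formula. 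The main obstacle will be the combinatorial bookkeeping: checking that $1/C_{stb}(L)$ correctly absorbs both the $1/\ell!$ and the overcounting coming from equal parts of $L$, and that the product of cycle-level signs assembles into $\text{sign}(\sigma)$ without spurious factors. I would sanity-check the final expression against the explicit $X_1$, $X_2$, $X_3$ displayed in the introduction before writing up the full proof.
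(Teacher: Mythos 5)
Your proposal is correct and reaches the theorem along the same overall skeleton as the paper (expand the product, read off $[\lambda^k]$, group compositions by their underlying partition $L$ with the $1/C_{stb}(L)$ weight, and reduce everything to the identity expressing $\sum_{n_1,\ldots,n_\ell\ \mathrm{distinct}}\prod_i a_{n_i,L_i}$ as $\sum_{\sigma\in S_\ell}\mathrm{sign}(\sigma)\mathbb{A}_{\sigma,L}$), but you establish the central identity by a genuinely different argument. The paper's Lemma \ref{unequalsum} proves it by induction on $m$: it peels off the last index $n_m$ via an explicit inclusion--exclusion over which earlier index collides with it, and matches the resulting terms with the decomposition $S_m=\dot\bigcup_\alpha D_\alpha$ using the insertion maps $\iota_\alpha$ (which flip the sign and graft $m$ into a cycle). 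You instead invoke M\"obius inversion on the set-partition lattice, using the classical value $\mu(\hat 0,\pi)=\prod_{B\in\pi}(-1)^{|B|-1}(|B|-1)!$, and then observe that this weight is exactly $\sum_\sigma\mathrm{sign}(\sigma)$ over the $\prod_B(|B|-1)!$ permutations with cycle partition $\pi$ --- legitimate because $\mathbb{A}_{\sigma,L}$ only sees the unordered content of each cycle. Your route is shorter and more conceptual but imports the M\"obius function of the partition lattice as a black box; the paper's induction is self-contained and elementary, and its $\iota_\alpha$ bookkeeping is essentially a hands-on proof of the same lattice identity. The two bookkeeping conventions for the weight $1/C_{stb}(L)$ also differ superficially (you symmetrize the indices $n_i$ at cost $1/\ell!$ and count the $\ell!/C_{stb}(L)$ ordered realizations of $L$; the paper symmetrizes over the $S_m$-orbit $F_O^{-1}(\{L\})$ in Lemma \ref{permutesum}), but these are equivalent. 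Your planned sanity check against the displayed $X_1,X_2,X_3$ is a reasonable final safeguard; no gap remains.
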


Although the notations are not defined yet, see Definition \ref{nonordertuple}-\ref{AsigmaL}, we still noticed that there are some hidden structure in this formula.
The outside summation is the sums over the partitions of $k$.
The inside summation is about the permutation group $S_m$ for some $m = \ell(L) \in \mathbb{N}$. The middle one $C_{stb}(L)$ is just the size of stabilizer, see the Remark of Definition \ref{alotoffunc}.
And the last symbol $\mathbb{A}_{\sigma,L}$ is defined in Definition \ref{AsigmaL} which describes the monomial terms of the polynomial in $\mathbb{Q}[S]$.


There are some research that are highly related, see \cite{Bell_polynomials1,WordBell,Infinite}.

The algorithm for the Main Theorem (Theorem \ref{finalcoeff}) can be found in the GitHub, see
\url{https://github.com/kuiyochen/X_k_coeffient_calculator}.

We also provide a new expression for Ordinary Bell polynomials,
at Theorem \ref{Bell} of Section \ref{sec:bell_polynomial},
as a corollary of Main Theorem.
\begin{theorem}[Theorem \ref{Bell}]
$$\hat{B}_{k+N,N}(a_0, a_1,\cdots, a_k)
=
\sum\limits_{L \vdash k}
\frac{1}{C_{stb}(L)}
\sum\limits_{\sigma\in S_{\ell(L)}}
sign(\sigma)\mathbb{A}_{\sigma,L}(\{a_{n,k}\}_{n,k=1}^\infty),
$$
where $a_0:=1$ and 
$a_{n,k}:=
\begin{cases}
a_k  &n<N\\
0  &\text{otherwise}
\end{cases}$.
\end{theorem}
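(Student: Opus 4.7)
The plan is to obtain this as a direct specialization of the Main Theorem (Theorem~\ref{finalcoeff}). First I would plug the sequence $\{a_{n,k}\}$ defined in the statement into equation~(\ref{X_k}). Since $a_{n,k}$ vanishes whenever $n\geq N$, the infinite product collapses to a finite product of identical copies of $1+a_1\lambda+a_2\lambda^2+\cdots$, so $X_k$ becomes the coefficient of $\lambda^k$ in a finite power of $(1+a_1\lambda+a_2\lambda^2+\cdots)$. This is precisely $\hat{B}_{k+N,N}(a_0,a_1,\ldots,a_k)$ under the generating-function definition of the ordinary partial Bell polynomial (using the convention $a_0=1$ and the shift by $\lambda^N$ that converts a coefficient in $(\sum_{j\geq 1}a_{j-1}\lambda^j)^N$ into a coefficient in $(a_0+a_1\lambda+\cdots)^N$).

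Once this identification is in place, the rest is automatic: by the Main Theorem, $X_k$ equals $\sum_{L\vdash k}\frac{1}{C_{stb}(L)}\sum_{\sigma\in S_{\ell(L)}}\operatorname{sign}(\sigma)\,\mathbb{A}_{\sigma,L}$ evaluated at the specialized sequence $\{a_{n,k}\}$, which is verbatim the right-hand side of the claimed identity. The two observations combine to give the corollary.

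The only real verification is that the generating-set elements $\sum_{n=1}^{\infty}\prod_{j=1}^{m} a_{n,k_j}$ from $S$ behave well under the specialization. Because each summand is $n$-independent on its support $\{1,\ldots,N-1\}$ and vanishes outside it, every such sum collapses to a finite multiple of $\prod_j a_{k_j}$; hence the convergence hypothesis on $S$ is trivially satisfied, and $\mathbb{A}_{\sigma,L}(\{a_{n,k}\})$ becomes an explicit polynomial in $a_1,\ldots,a_k$ with coefficients that are functions of $N$. The hardest part of the write-up, in my estimation, is not mathematical but notational: carefully aligning the paper's convention for $\hat{B}_{k+N,N}$ (the index shift, the role of the $a_0=1$ argument, and the exact count of factors in the collapsed product) so that the asserted equation reads as stated. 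After that, Theorem~\ref{finalcoeff} applies immediately and no combinatorial work remains.
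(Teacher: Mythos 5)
Your proposal is correct and follows essentially the same route as the paper: specialize the Main Theorem to the sequence that is constant in $n$ on its finite support, and identify the resulting coefficient of $\lambda^{k}$ in the finite power of $f(\lambda)$ with $\hat{B}_{k+N,N}(a_0,\ldots,a_k)$ via the generating-function definition applied to $(\lambda f(\lambda))^N$. You also rightly flag the only delicate point, the exact count of factors (the statement's ``$n<N$'' versus the $N$ copies needed to match $\hat{B}_{k+N,N}$), which is precisely where the paper's own write-up is loosest.
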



\section{Motivation} 
\label{sec:motivation}
The motivation of this paper is inspired by calculating the eigenvalues problems of the Sturm-Liouville operators, see \cite{Physics, LS_formula}.
Let $L_\lambda$ be the Sturm-Liouville operators with an parameter $\lambda$ in $(-\epsilon, \epsilon)$ for some $\epsilon >0$.
There are some well-known statements below.
\begin{enumerate}
\item
The spectrum of $L_\lambda$ are real, discrete, bounded below and unbounded above.
\item
All elements in spectrum of $L_\lambda$ are eigenvalues of $L_\lambda$.
\item
The $n$-th eigenvalue $\theta_n$ of $L_\lambda$ are real analytic in $\lambda$ for all $n$, see \cite{POINCARIE}.
\end{enumerate}
The \textbf{Normalized Spectral Determinant} of $L_\lambda$ is defined by 
$\prod\limits_{n=1}^\infty \frac{\theta_n(\lambda)}{\theta_n(0)}$
which converges uniformly in $\lambda$ (see \cite{LS_formula}),
which is an infinite product of real analytic functions in $\lambda$.
On the other hand, since this infinite product converges uniformly, it is itself an analytic function as well.
It is natural to explore the relations between the Taylor coefficients of normalized spectral determinant and the coefficients of individual eigenvalues.
(Special thanks to Chuan-Tsung Chan, whose inspiration motivated us throughout this entire process.)
So after investigation, the Main Theorem (Theorem \ref{finalcoeff}) appears.
Furthermore, we found that the formula in Main Theorem has additional uses
in the case of finite product, see Section \ref{sec:some_notable_applications_in_finite_product_cases}.



\section{The calculation of the coefficients} 
\label{sec:the_calculation_of_the_coefficients}

The notions in Definition \ref{nonordertuple}-\ref{AsigmaL} extract some deep information in the structures hidden in $X_k$.



\begin{definition}
\label{nonordertuple}
Let $Y$ be an arbitrary set.
Consider the natural symmetric group action on $m$-tuple,
\begin{align*}
S_m \times Y^m &\to Y^m\\
(\sigma, (x_1, x_2, \cdots, x_m)) &\mapsto (x_{\sigma(1)}, x_{\sigma(2)}, \cdots, x_{\sigma(m)}).
\end{align*}
We say $L$ is a \textbf{non-ordered $m$-tuple}
if $L$ is an element of the orbit space of this action.
For $t=(x_1, x_2, \cdots, x_m)$, we denote the corresponding orbit by the brackets $[x_1, x_2, \cdots, x_m]$.
For instance,
$[1, 1, 3, 5, 3]=[5, 3, 3, 1, 1]=[1, 1, 3, 3, 5].$
Intuitively speaking, the non-ordered tuple is the tuple that forget its ordering.
\end{definition}

Now, we would like to define various of functions or symbols with a little bit of confuse notation.
\begin{definition} $ $
\label{alotoffunc}
\begin{enumerate}
\item
The \textit{ordered forget function} $F_O((x_1, x_2, \cdots, x_m)):=[x_1, x_2, \cdots, x_m]$.
\item
The \textit{tuple forget function}\\
$F_T([x_1, x_2, \cdots, x_m]):=\{x_i|i=1, \cdots , m\}$.
\item
\textit{Partitions of integers}. Let $L = [x_1, x_2, \cdots, x_m]$ with $x_i$ being positive integers.
We say $L$ is a partition of $k$, denote by
$$L\vdash k,$$
if $\sum\limits_{i=1}^m x_i = k$.
\item
The \textit{stabilizer counting function}. For $L = [x_1, x_2, \cdots, x_m]$,
$$C_{stb}(L):=\prod\limits_{x\in F_T(L)} (\#\{i| x_i=x\})!.$$
For instance,
$C_{stb}([1, 1, 1, 1, 3, 3, 5])=4!\cdot2!\cdot1!=24\cdot 2\cdot 1=48.$
The subscript ``stb'' means the ``stabilizer'', see the remark below.
\item
The \textit{length function}. For $L = [x_1, x_2, \cdots, x_m]$,
$$\ell(L)=m.$$
For instance,
$\ell([1, 1, 2, 2, 3])=5.$
\end{enumerate}
\end{definition}
\begin{remark} $ $

Given a non-ordered tuple $L$ with length $m$.
Consider a $S_m$ group action on the set $F_O^{-1}(\{L\})$ (the pre-image of $F_O$) by permuting components. The number $C_{stb}(L)$ is in fact the cardinality of the stabilizer at any $t\in F_O^{-1}(\{L\})$. For example, $L = [1, 1, 1, 1, 3, 3, 5], t = (1, 1, 1, 1, 3, 3, 5)\in F_O^{-1}(\{L\})$.
The stabilizer $G_t$ is a subgroup in $S_m$ that is isomorphic to $S_4\times S_2\times S_1$,
the cardinality is $4!\cdot2!\cdot1!$.
\end{remark}

\begin{definition}
\label{AL}
For a given non-ordered tuple of integers $L=[k_1, k_2, \cdots, k_m]$, we define
$$A_L(\{a_{n,k}\}_{n,k=1}^\infty)=A_{[k_1, k_2, \cdots, k_m]}(\{a_{n,k}\}_{n,k=1}^\infty):=
\sum\limits_{n=1}^{\infty}\prod\limits_{j=1}^{m}a_{n,k_j}.$$
And for given non-ordered tuples $L_1, L_2, \cdots L_m$, we define
$$A_{[L_1, L_2, \cdots, L_m]}(\{a_{n,k}\}_{n,k=1}^\infty):=
\prod\limits_{j=1}^{m}A_{L_j}(\{a_{n,k}\}_{n,k=1}^\infty).$$

We will denote $A_L(\{a_{n,k}\}_{n,k=1}^\infty), A_{[L_1, L_2, \cdots, L_m]}(\{a_{n,k}\}_{n,k=1}^\infty)$
by $A_L, A_{[L_1, L_2, \cdots, L_m]}$ as an abbreviation if there is no confusion.

For instance, let $L_1=[1, 1, 3], L_2=[2, 4], L_3=[5]$. Then
\begin{align*}
A_{[L_1, L_2, L_3]}&=A_{L_1}A_{L_2}A_{L_3}\\
&=A_{[1,1,3]}A_{[2,4]}A_{[5]}\\
&=
\left(\sum\limits_{n=1}^{\infty}a_{n,1}a_{n,1}a_{n,3}\right)
\left(\sum\limits_{n=1}^{\infty}a_{n,2}a_{n,4}\right)
\left(\sum\limits_{n=1}^{\infty}a_{n,5}\right)\\
&=
\left(\sum\limits_{n=1}^{\infty}a_{n,1}^2a_{n,3}\right)
\left(\sum\limits_{n=1}^{\infty}a_{n,2}a_{n,4}\right)
\left(\sum\limits_{n=1}^{\infty}a_{n,5}\right).
\end{align*}
So, every $A_{[L_1, L_2, \cdots, L_m]}$ is in $\mathbb{Q}[S]$
since it is just a product of the forms in $S$.
\end{definition}

\begin{definition}
\label{AsigmaL}
For given fixed non-ordered tuple $L=[k_1, k_2, \cdots, k_m]$
with length $m$ and $k_j$ being non-decreasing
and $\sigma\in S_m$.
Since every permutation can be expressed as disjoint cycles,
$$\sigma = (c_{1,1}\cdots c_{1, q_1})(c_{2,1}\cdots c_{2, q_2})\cdots (c_{m',1}\cdots c_{m', q_{m'}}),$$
we define
$$\mathbb{A}_{\sigma,L}(\{a_{n,k}\}_{n,k=1}^\infty):=
A_{[[k_{c_{1,1}}\cdots k_{c_{1, q_1}}],[k_{c_{2,1}}\cdots k_{c_{2, q_2}}],\cdots ,
[k_{c_{m',1}}\cdots k_{c_{m', q_{m'}}}]]}(\{a_{n,k}\}_{n,k=1}^\infty).
$$

We will denote $\mathbb{A}_{\sigma,L}(\{a_{n,k}\}_{n,k=1}^\infty)$
by $\mathbb{A}_{\sigma,L}$ as an abbreviation if there is no confusion.

\begin{remark}
It is not required that $k_j$ be chosen to be non-decreasing,
which is irrelevant in the final purpose (The Main Theorem \ref{finalcoeff}).
However, in order to make the definition of $\mathbb{A}_{\sigma,L}$ clear,
we still assume that $k_j$ are non-decreasing.
\end{remark}
Notice that in the classical cycle expressions of permutations, we omit all length-one-cycles. 
However in this paper, we will keep them for all cycle expressions of permutations.
For instance, let $L=[k_1, \cdots, k_6]$ and $\sigma = (1 2 4)(3 6)(5)\in S_6$.
$$\mathbb{A}_{\sigma,L}=
A_{[[k_1 k_2 k_4],[k_3 k_6],[k_5]]}
=
\left(\sum\limits_{n=1}^{\infty}a_{n,k_1}a_{n,k_2}a_{n,k_4}\right)
\left(\sum\limits_{n=1}^{\infty}a_{n,k_3}a_{n,k_6}\right)
\left(\sum\limits_{n=1}^{\infty}a_{n,k_5}\right).
$$
\end{definition}

Next, we go into the detail of the proof of main theorem.
Especially, Lemma \ref{unequalsum} provides the most important algorithm for calculation of $X_k$.

\begin{lemma}
\label{unequalsum}
We have the following equation.
$$\sum\limits_{\substack{n_1, \cdots, n_m\\ 
\text{are pairwise distinct}}}
\prod\limits_{j=1}^{m}a_{n_j,k_j} = 
\sum\limits_{\sigma\in S_m}sign(\sigma)\mathbb{A}_{\sigma, L},$$
for $L = [k_1,k_2,\cdots,k_m]$.
\end{lemma}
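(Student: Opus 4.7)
My plan is to rewrite both sides of the identity as sums indexed by the set partitions of $\{1,2,\ldots,m\}$ and match them term-by-term. On the left, the set partition will encode which of the indices $n_j$ happen to coincide; on the right, it will encode the orbit structure of the permutation $\sigma$. Once the two sides are seen to be the same sum, the identity is immediate.

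For the LHS, I would first introduce, for each set partition $\pi = \{B_1,\ldots,B_r\}$ of $\{1,\ldots,m\}$, the auxiliary sum
\begin{equation*}
g(\pi) \;:=\; \sum_{n_1,\ldots,n_r\ge 1}\;\prod_{i=1}^{r}\prod_{j\in B_i} a_{n_i,k_j} \;=\; \prod_{i=1}^{r} A_{[k_j:\,j\in B_i]},
\end{equation*}
where the factorization is just distributivity of the independent sums. The original LHS (all indices distinct) corresponds to the finest partition $\hat 0$ of singletons. Sorting arbitrary tuples by their equality pattern and applying Möbius inversion on the partition lattice $\Pi_m$ then gives
\begin{equation*}
\sum_{\substack{n_1,\ldots,n_m \\ \text{pairwise distinct}}}\prod_{j=1}^{m}a_{n_j,k_j} \;=\; \sum_{\pi\in\Pi_m}\mu(\hat 0,\pi)\,g(\pi),
\end{equation*}
with the classical Schützenberger formula $\mu(\hat 0,\pi)=\prod_{i}(-1)^{b_i-1}(b_i-1)!$ when $\pi$ has block sizes $b_1,\ldots,b_r$.

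For the RHS, I would group permutations $\sigma\in S_m$ according to the set partition $\pi(\sigma)$ whose blocks are the supports of the disjoint cycles of $\sigma$. Reading Definition \ref{AsigmaL} and recalling from Definition \ref{AL} that $A_{[\cdots]}$ depends only on the underlying non-ordered tuple, one sees that $\mathbb{A}_{\sigma,L}=g(\pi(\sigma))$: each cycle of $\sigma$ produces an inner sum $\sum_n \prod_{j\in B} a_{n,k_j}$ that is insensitive to the cyclic ordering within $B$. For a fixed $\pi$ with block sizes $q_1,\ldots,q_r$, the number of $\sigma\in S_m$ with $\pi(\sigma)=\pi$ is $\prod_i(q_i-1)!$ (one cyclic ordering per block), and every such $\sigma$ has sign $\prod_i(-1)^{q_i-1}=(-1)^{m-r}$. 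Hence the signed contribution of all $\sigma$ with fixed orbit partition $\pi$ is exactly $\mu(\hat 0,\pi)\,g(\pi)$, so summing over $\pi$ recovers the LHS formula derived above.

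The main obstacle is the invocation of the Möbius function of $\Pi_m$: either we cite Schützenberger's formula, or replace this step by a direct induction on $m$, splitting the innermost sum according to whether $n_m$ coincides with some earlier $n_j$. A smaller point to verify carefully is that inside $\mathbb{A}_{\sigma,L}$ the cycle presentation of $\sigma$ enters only through its set of orbits, which is automatic because the inner brackets in Definition \ref{AsigmaL} are non-ordered tuples. Neither step is conceptually difficult; the heart of the argument is simply the combinatorial dictionary between set partitions and cycle types of permutations.
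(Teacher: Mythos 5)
Your proof is correct, but it follows a genuinely different route from the paper's. The paper proves Lemma \ref{unequalsum} by induction on $m$: it peels off the index $n_m$, writes the pairwise-distinct sum as a free sum over $n_m$ minus the $m-1$ sums with $n_m=n_\alpha$, absorbs each coincidence into modified coefficients $\tilde a^{(\alpha)}$, and then matches the resulting terms with the decomposition $S_m=D_m\,\dot\cup\,\dot\bigcup_{\alpha=1}^{m-1}D_\alpha$ via explicit insertion maps $\iota_\alpha:S_{m-1}\to D_\alpha$ that flip the sign. This is exactly the ``direct induction on $m$, splitting according to whether $n_m$ coincides with an earlier $n_j$'' that you mention as a fallback, so your proposed alternative to Möbius inversion is the paper's actual argument. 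Your main route instead organizes both sides over the partition lattice $\Pi_m$: the left side via the kernel (coincidence pattern) of the tuple $(n_1,\dots,n_m)$ and Möbius inversion with $\mu(\hat 0,\pi)=\prod_i(-1)^{b_i-1}(b_i-1)!$, the right side by grouping permutations by orbit partition, observing that $\mathbb{A}_{\sigma,L}$ depends only on that partition (correct, since the inner brackets in Definition \ref{AsigmaL} are non-ordered tuples) and that the signed count of permutations with prescribed orbit partition reproduces the same Möbius values. This is a clean conceptual explanation of \emph{why} the coefficients $\pm 1$ assemble as they do (it is the classical identity between the Möbius function of $\Pi_m$ and signed counts of permutations by cycle type, visible in the paper's $[1,1,1,1,1]$ example with coefficients $1,10,20,15,30,20,24$), at the cost of importing the Schützenberger--Rota formula as an external ingredient; the paper's induction is longer but fully self-contained and doubles as a recursive algorithm for computing these sums.
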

\begin{example}
For, $L=[1,2,3]$.\\
$\sum\limits_{\substack{n_1, n_2, n_3=1\\ 
n_1, n_2, n_3\text{ are pairwise distinct}}}^{\infty}
a_{n_1,1}a_{n_2,2}a_{n_3,3}=$
$$
\begin{array}{lll}
	\begin{array}{lllll}
	\raisebox{0.52cm}{ }\raisebox{-0.52cm}{ } &\mathbb{A}_{(1)(2)(3),L} \\
	\raisebox{0.52cm}{ }\raisebox{-0.52cm}{ }+&\mathbb{A}_{(123),L}     \\
	\raisebox{0.52cm}{ }\raisebox{-0.52cm}{ }+&\mathbb{A}_{(132),L}     \\
	\raisebox{0.52cm}{ }\raisebox{-0.52cm}{ }-&\mathbb{A}_{(1)(23),L}   \\
	\raisebox{0.52cm}{ }\raisebox{-0.52cm}{ }-&\mathbb{A}_{(12)(3),L}   \\
	\raisebox{0.52cm}{ }\raisebox{-0.52cm}{ }-&\mathbb{A}_{(13)(2),L} 
	\end{array}
	&=&
	\begin{array}{lllll}
	&  &\left(\sum\limits_{n=1}^{\infty}a_{n,1}\right)
		\left(\sum\limits_{n=1}^{\infty}a_{n,2}\right)
		\left(\sum\limits_{n=1}^{\infty}a_{n,3}\right)\\
	& +&\left(\sum\limits_{n=1}^{\infty}a_{n,1}a_{n,2}a_{n,3}\right)\\
	& +&\left(\sum\limits_{n=1}^{\infty}a_{n,1}a_{n,3}a_{n,2}\right)\\
	& -&\left(\sum\limits_{n=1}^{\infty}a_{n,1}\right)
		\left(\sum\limits_{n=1}^{\infty}a_{n,2}a_{n,3}\right)\\
	& -&\left(\sum\limits_{n=1}^{\infty}a_{n,1}a_{n,2}\right)
		\left(\sum\limits_{n=1}^{\infty}a_{n,3}\right)\\
	& -&\left(\sum\limits_{n=1}^{\infty}a_{n,1}a_{n,3}\right)
		\left(\sum\limits_{n=1}^{\infty}a_{n,2}\right)
	\end{array}
\end{array}
$$


\end{example}
\begin{example}
For, $L=[1,1,1,1,1]$.
\begin{align*}
&\sum\limits_{\substack{n_1, n_2, n_3, n_4, n_5=1\\ 
n_1, n_2, n_3, n_4, n_5\text{ are pairwise distinct}}}^{\infty}
a_{n_1,1}a_{n_2,1}a_{n_3,1}a_{n_4,1}a_{n_5,1}\\
&=1\left(\sum\limits_{n=1}^{\infty}a_{n,1}\right)^5
-10\left(\sum\limits_{n=1}^{\infty}a_{n,1}\right)^3\left(\sum\limits_{n=1}^{\infty}a_{n,1}^2\right)\\
&+20\left(\sum\limits_{n=1}^{\infty}a_{n,1}\right)^2
\left(\sum\limits_{n=1}^{\infty}a_{n,1}^3\right)
+15\left(\sum\limits_{n=1}^{\infty}a_{n,1}\right)
\left(\sum\limits_{n=1}^{\infty}a_{n,1}^2\right)^2\\
&-30\left(\sum\limits_{n=1}^{\infty}a_{n,1}\right)\left(\sum\limits_{n=1}^{\infty}a_{n,1}^4\right)
-20\left(\sum\limits_{n=1}^{\infty}a_{n,1}^2\right)\left(\sum\limits_{n=1}^{\infty}a_{n,1}^3\right)
+24\left(\sum\limits_{n=1}^{\infty}a_{n,1}^5\right).
\end{align*}
If $\sigma,\sigma'$ have same cycle type,
then
$\mathbb{A}_{\sigma,[1,1,1,1,1]}=\mathbb{A}_{\sigma',[1,1,1,1,1]}$.
Notice that $\sigma,\sigma'$ have same cycle type if and only if
the conjugacy classes $[\sigma]$ and $[\sigma']$ are equal, 
so the coefficients $1,10,20,15,30,20,24$
are the cardinality of the conjugacy classes
$$[e],[(12)],[(123)],[(12)(34)],[(1234)],[(12)(345)],[(12345)].$$
\end{example}

\begin{proof}[proof of lemma \ref{unequalsum}]
We will prove it by induction on $m$.
For the base case $m=1$, it can be verified directly from the definition.
For the inductive step $m>1$.
\begin{align*}
\sum\limits_{\substack{n_1, \cdots, n_m\\ 
\text{are pairwise distinct}}}
\prod\limits_{j=1}^{m}a_{n_j,k_j}
&=
\sum\limits_{\substack{n_1, \cdots, n_{m-1}\\ 
\text{are pairwise distinct}}}
\sum\limits_{n_m}
\prod\limits_{j=1}^{m}a_{n_j,k_j}\\
&-
\sum\limits_{\substack{n_1, \cdots, n_{m-1}\\ 
\text{are pairwise distinct}\\
n_1=n_m
}}
\prod\limits_{j=1}^{m}a_{n_j,k_j}\\
&-
\sum\limits_{\substack{n_1, \cdots, n_{m-1}\\ 
\text{are pairwise distinct}\\
n_2=n_m
}}
\prod\limits_{j=1}^{m}a_{n_j,k_j}\\
&-\cdots\\
&-
\sum\limits_{\substack{n_1, \cdots, n_{m-1}\\ 
\text{are pairwise distinct}\\
n_{m-1}=n_m
}}
\prod\limits_{j=1}^{m}a_{n_j,k_j}\\
&=
\left(
\sum\limits_{\substack{n_1, \cdots, n_{m-1}\\ 
\text{are pairwise distinct}}}
\prod\limits_{j=1}^{m-1}a_{n_j,k_j}
\right)
\left(\sum\limits_{n_m}a_{n_m,k_m}\right)\\
&-
\sum\limits_{\alpha=1}^{m-1}
\sum\limits_{\substack{n_1, \cdots, n_{m-1}\\ 
\text{are pairwise distinct}\\
n_\alpha =n_m
}}
\prod\limits_{j=1}^{m}a_{n_j,k_j}.
\end{align*}

In order to apply induction hypothesis, define
$$\tilde{a}_{n_j,k_j}^{(\alpha)} := 
\begin{cases}
a_{n_j,k_j}a_{n_m,k_m}  &j=\alpha\\
a_{n_j,k_j}  &j\neq \alpha,
\end{cases}$$
for $\alpha=1,\cdots,m-1$, 
and define
$$\tilde{\mathbb{A}}^{(\alpha)}_{\sigma,[k_1,\cdots,k_{m-1}]}:= 
\mathbb{A}_{\sigma,[k_1,\cdots,k_{m-1}]}(\{\tilde{a}_{n,k}^{(\alpha)}\}_{n,k=1}^\infty).$$

Define a partition of the set $S_m$.
$$D_\alpha:=\left\{
\left.
\sigma\in S_m
\right|
\sigma(\alpha)=m, \text{ i.e. $\sigma$ sends $\alpha$ to $m$}
\right\}.
$$
So $S_m = D_m \dot\cup \left(\dot\bigcup_{\alpha=1}^{m-1} D_\alpha\right)$.
For $\alpha=m$, $D_m$ is a subgroup of $S_m$ that is isomorphic to $S_{m-1}$.
For $\alpha=1, \cdots, m-1$,
$S_{m-1}$ and $D_\alpha$ are bijective (but not isomorphic as groups)
, and these bijections are defined as follows.
Define $\iota_\alpha:S_{m-1}\to D_\alpha$ by 
$$
\iota_\alpha(\sigma)(i) = 
\begin{cases}
\sigma(\alpha)  &i=m\\
m  &i=\alpha\\
\sigma(i)  &\text{otherwise}.
\end{cases}
$$
Intuitively speaking, $\iota_\alpha$ insert $m$ behind $\alpha$.
For example, $m=7$,
\begin{align*}
\iota_3((154)(26)(3))&=(154)(26)(37)\\
\iota_3((13)(256)(4))&=(137)(256)(4)\\
\iota_3((1235)(64))&=(12375)(64)\\
\iota_3(e)=\iota_3((1)(2)(3)(4)(5)(6))&=(1)(2)(37)(4)(5)(6)=(37).
\end{align*}

We may verify the following things which we would like to left it as an exercise to the reader.
For $\alpha=1, \cdots, m-1$ and $\sigma\in S_{m-1}$,
\begin{enumerate}
\item
$\tilde{\mathbb{A}}^{(\alpha)}_{\sigma,[k_1,\cdots,k_{m-1}]}=\mathbb{A}_{\iota_\alpha(\sigma),[k_1,\cdots,k_{m-1},k_m]}.$
\item
$sign(\sigma) = -sign(\iota_\alpha(\sigma)).$
\end{enumerate}

So we find that
\begin{align*}
&
\left(
\sum\limits_{\substack{n_1, \cdots, n_{m-1}\\ 
\text{are pairwise distinct}}}
\prod\limits_{j=1}^{m-1}a_{n_j,k_j}
\right)
\left(\sum\limits_{n_m}a_{n_m,k_m}\right)
-
\sum\limits_{\alpha=1}^{m-1}
\sum\limits_{\substack{n_1, \cdots, n_{m-1}\\ 
\text{are pairwise distinct}\\
n_\alpha =n_m
}}
\prod\limits_{j=1}^{m}a_{n_j,k_j}\\
&=
\left(
\sum\limits_{\sigma\in S_{m-1}}
sign(\sigma)\mathbb{A}_{\sigma,[k_1,\cdots,k_{m-1}]}
\right)
\left(\sum\limits_{n_m}a_{n_m,k_m}\right)
-
\sum\limits_{\alpha=1}^{m-1}
\sum\limits_{\substack{n_1, \cdots, n_{m-1}\\ 
\text{are pairwise distinct}
}}
\prod\limits_{j=1}^{m-1}\tilde{a}_{n_j,k_j}^{(\alpha)}\\
&=
\sum\limits_{\sigma\in D_m}
sign(\sigma)\mathbb{A}_{\sigma,[k_1,\cdots,k_{m-1},k_m]}
-
\sum\limits_{\alpha=1}^{m-1}
\sum\limits_{\sigma\in S_{m-1}}
sign(\sigma)\tilde{\mathbb{A}}^{(\alpha)}_{\sigma,[k_1,\cdots,k_{m-1}]}\\
&=
\sum\limits_{\sigma\in D_m}
sign(\sigma)\mathbb{A}_{\sigma,[k_1,\cdots,k_{m-1},k_m]}
+
\sum\limits_{\alpha=1}^{m-1}
\sum\limits_{\iota_\alpha(\sigma)\in \iota_\alpha(S_{m-1})}
sign(\iota_\alpha(\sigma))\mathbb{A}_{\iota_\alpha(\sigma),[k_1,\cdots,k_{m-1},k_m]}\\
&=
\sum\limits_{\sigma\in D_m}
sign(\sigma)\mathbb{A}_{\sigma,[k_1,\cdots,k_{m-1},k_m]}
+
\sum\limits_{\alpha=1}^{m-1}
\sum\limits_{\sigma\in D_\alpha}
sign(\sigma)\mathbb{A}_{\sigma,[k_1,\cdots,k_{m-1},k_m]}\\
&=
\sum\limits_{\sigma\in S_m}
sign(\sigma)\mathbb{A}_{\sigma,L}.
\end{align*}
Thus, the equation holds.
\end{proof}

\begin{lemma}
\label{permutesum}
Given a non-ordered tuple $L$ with length $m$.
We have the following equation.
$$\sum\limits_{(k_1, \cdots, k_m)\in F_O^{-1}(\{L\})}
\sum\limits_{1\le n_1<n_2<\cdots<n_m}\prod\limits_{j=1}^{m}a_{n_j,k_j}=
\frac{1}{C_{stb}(L)}
\sum\limits_{\sigma\in S_m}
sign(\sigma)\mathbb{A}_{\sigma,L}.$$
\end{lemma}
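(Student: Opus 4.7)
The plan is to deduce Lemma \ref{permutesum} from Lemma \ref{unequalsum} via a straightforward orbit-stabilizer calculation combined with a change of variable that swaps the roles of permuting the $k$-indices and permuting the $n$-indices. No new structural ingredients beyond Lemma \ref{unequalsum} should be required.

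First I would fix one ordered representative $(k_1,\ldots,k_m)$ of $L$ (say the non-decreasing one, as in Definition \ref{AsigmaL}). Under the natural $S_m$ action on ordered $m$-tuples, the orbit of $(k_1,\ldots,k_m)$ is exactly $F_O^{-1}(\{L\})$, and, by the remark following Definition \ref{alotoffunc}, its stabilizer has cardinality $C_{stb}(L)$. Orbit-stabilizer then gives, for any function $f$ on $m$-tuples,
\begin{equation*}
\sum_{(k'_1,\ldots,k'_m)\in F_O^{-1}(\{L\})} f(k'_1,\ldots,k'_m) \;=\; \frac{1}{C_{stb}(L)}\sum_{\tau\in S_m} f(k_{\tau(1)},\ldots,k_{\tau(m)}).
\end{equation*}
Applying this with $f(k'_1,\ldots,k'_m)=\sum_{n_1<\cdots<n_m}\prod_j a_{n_j,k'_j}$ rewrites the left-hand side of Lemma \ref{permutesum} as $\tfrac{1}{C_{stb}(L)}$ times the double sum $\sum_{\tau\in S_m}\sum_{n_1<\cdots<n_m}\prod_j a_{n_j,k_{\tau(j)}}$.

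Next I would merge the $\tau$-sum with the increasing-$n$ sum. The substitution $i=\tau(j)$ in the product gives $\prod_j a_{n_j,k_{\tau(j)}}=\prod_i a_{n_{\tau^{-1}(i)},k_i}$, and after replacing $\tau$ by $\tau^{-1}$ (a bijection on $S_m$) the double sum becomes
\begin{equation*}
\sum_{\tau\in S_m}\sum_{n_1<\cdots<n_m}\prod_i a_{n_{\tau(i)},k_i}.
\end{equation*}
For each strictly increasing tuple $(n_1,\ldots,n_m)$, as $\tau$ ranges over $S_m$ the tuple $(n_{\tau(1)},\ldots,n_{\tau(m)})$ runs through each ordering of the underlying $m$-element subset of $\mathbb{N}$ exactly once. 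Hence the double sum collapses to $\sum_{n_1,\ldots,n_m\text{ pairwise distinct}}\prod_i a_{n_i,k_i}$.

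Finally, Lemma \ref{unequalsum} identifies the pairwise-distinct sum with $\sum_{\sigma\in S_m}sign(\sigma)\mathbb{A}_{\sigma,L}$, and dividing by $C_{stb}(L)$ yields the claimed formula. The only mildly delicate step is the index juggling between $\tau$ acting on the $k$-indices and $\tau$ acting on the $n$-indices; the main obstacle, such as it is, lies in being careful that summing over all orderings of $L$ combined with the restriction $n_1<\cdots<n_m$ really is equivalent to the "pairwise distinct" condition with $L$ in its fixed ordering. Once this bookkeeping is done, the lemma drops out.
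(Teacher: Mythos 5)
Your proposal is correct and follows essentially the same route as the paper's own proof: fix a representative of $L$, use the transitive $S_m$-action on $F_O^{-1}(\{L\})$ with stabilizer of size $C_{stb}(L)$ to trade the orbit sum for a sum over $S_m$, shift the permutation from the $k$-indices to the $n$-indices, collapse the result to the pairwise-distinct sum, and invoke Lemma \ref{unequalsum}. The index bookkeeping you flag as delicate is handled identically in the paper.
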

\begin{proof}
Fix a $t_0 = (k_1^{(0)}, k_2^{(0)}, \cdots, k_m^{(0)})\in F_O^{-1}(\{L\})$.
Since $S_m$ acts transitively on $F_O^{-1}(\{L\})$,
for any $t = (k_1, k_2, \cdots, k_m)\in F_O^{-1}(\{L\})$, 
$t = \sigma \cdot t_0$ for some $\sigma \in S_n$.
That is, 
$t = (k_1, k_2, \cdots, k_m) = (k_{\sigma(1)}^{(0)}, k_{\sigma(2)}^{(0)}, \cdots, k_{\sigma(m)}^{(0)})$. Also, the number of all possible $\sigma$ such that $t = \sigma \cdot t_0$ is $C_{stb}(L)$.

Then
\begin{align*}
C_{stb}(L)
\left(
\sum\limits_{(k_1, \cdots, k_m)\in F_O^{-1}(\{L\})}
\sum\limits_{1\le n_1<\cdots<n_m}\prod\limits_{j=1}^{m}a_{n_j,k_j}
\right)
&=
\sum\limits_{\sigma \in S_m}
\sum\limits_{1\le n_1<\cdots<n_m}\prod\limits_{j=1}^{m}a_{n_j,k_{\sigma(j)}^{(0)}}\\
&=
\sum\limits_{\sigma \in S_m}
\sum\limits_{1\le n_1<\cdots<n_m}\prod\limits_{j=1}^{m}a_{n_{\sigma^{-1}(j)},k_j^{(0)}}\\
&=
\sum\limits_{\sigma \in S_m}
\sum\limits_{1\le n_{\sigma(1)}'<\cdots<n_{\sigma(m)}'}\prod\limits_{j=1}^{m}a_{n_j',k_j^{(0)}}\\
&=
\sum\limits_{\substack{n_1', \cdots, n_m'\\ \text{are pairwise distinct}}}\prod\limits_{j=1}^{m}a_{n_j',k_j^{(0)}}.
\end{align*}
By Lemma \ref{unequalsum}, the proof is completed.
\end{proof}
\begin{theorem}
\label{finalcoeff}
Given $k\in\mathbb{N}$.
The coefficient $X_k$ we described in equation(\ref{X_k}) is
$$\sum\limits_{L \vdash k}
\frac{1}{C_{stb}(L)}
\sum\limits_{\sigma\in S_{\ell(L)}}
sign(\sigma)\mathbb{A}_{\sigma,L}.$$
\end{theorem}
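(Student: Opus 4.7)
The plan is to extract $X_k$ directly from the product, then regroup the resulting sum so that Lemma \ref{permutesum} can be applied termwise over partitions of $k$.

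First I would expand the product. Since each factor $(1 + \sum_{k\ge 1} a_{n,k}\lambda^k)$ has constant term $1$, any monomial contributing to $\lambda^k$ uses only finitely many factors with a genuinely chosen $a_{n,k_j}\lambda^{k_j}$ term; from the remaining factors one picks the constant $1$. Formally, by the uniform convergence and analyticity assumption, the coefficient of $\lambda^k$ can be written as
\begin{equation*}
X_k = \sum_{m=1}^{k} \;\sum_{1 \le n_1 < n_2 < \cdots < n_m} \;\sum_{\substack{k_1,\ldots,k_m \ge 1 \\ k_1 + \cdots + k_m = k}} \prod_{j=1}^{m} a_{n_j,k_j}.
\end{equation*}
(The outer sum truncates at $m = k$ since each $k_j \ge 1$.) Establishing this identity is essentially bookkeeping, but I would be explicit that the choice of factors from which to take the nontrivial term is encoded by the strictly increasing tuple $(n_1,\ldots,n_m)$, and that within each chosen factor the exponent choice is $k_j$.

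Next I would regroup the innermost sum by partition type. The ordered tuples $(k_1,\ldots,k_m)$ of positive integers summing to $k$ are precisely the elements of $F_O^{-1}(\{L\})$ as $L$ ranges over partitions $L \vdash k$ with $\ell(L)=m$. Swapping sums gives
\begin{equation*}
X_k = \sum_{L \vdash k} \;\sum_{(k_1,\ldots,k_m) \in F_O^{-1}(\{L\})} \;\sum_{1 \le n_1 < \cdots < n_m} \prod_{j=1}^{m} a_{n_j,k_j},
\end{equation*}
where $m=\ell(L)$. At this point Lemma \ref{permutesum} applies to the two inner sums verbatim for each fixed $L$, converting them to $\frac{1}{C_{stb}(L)}\sum_{\sigma \in S_{\ell(L)}} sign(\sigma)\,\mathbb{A}_{\sigma,L}$. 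Substituting back yields the claimed formula.

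The only real obstacle is the first step: justifying the reordering of terms needed to write $X_k$ as the triple sum above. This is where the convergence hypothesis (uniform convergence of the infinite product and convergence of the forms in $S$) is used, essentially to ensure absolute convergence of the rearranged series so that Fubini-style manipulations are legitimate. After that, the combinatorial content is entirely contained in Lemma \ref{unequalsum} and its packaged form Lemma \ref{permutesum}; the remainder of the argument is simply a reindexing by partitions of $k$, with no further nontrivial computation required.
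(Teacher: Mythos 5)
Your proposal is correct and follows essentially the same route as the paper: write $X_k$ as the sum over ordered tuples $(k_1,\ldots,k_m)$ with $k_1+\cdots+k_m=k$ and strictly increasing indices $n_1<\cdots<n_m$, regroup the tuples into fibers $F_O^{-1}(\{L\})$ over partitions $L\vdash k$, and apply Lemma \ref{permutesum} to each fiber. The only difference is that you flag the convergence/rearrangement issue explicitly, which the paper handles by assumption in Sections 1--4 and defers to Section \ref{sec:convergence} otherwise.
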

\begin{proof}
Recall that 
$\prod\limits_{n=1}^{\infty}(1+\sum\limits_{k=1}^{\infty} a_{n,k}\lambda^k)=
1+\sum\limits_{k=1}^{\infty} X_k\lambda^k$.
From this equation, we directly calculate $X_k$,
$$X_k=
\sum\limits_{\substack{k_1 + \cdots + k_m=k\\ 1\le k_j}}
\sum\limits_{1\le n_1<n_2<\cdots<n_m}
\prod\limits_{j=1}^{m}a_{n_j,k_j}.$$
Observe that
$$\sum\limits_{\substack{k_1 + \cdots + k_m=k\\ 1\le k_j}}
\sum\limits_{1\le n_1<n_2<\cdots<n_m}
\prod\limits_{j=1}^{m}a_{n_j,k_j}$$
$$
=
\sum\limits_{L \vdash k}
\left(
\sum\limits_{(k_1, \cdots, k_m)\in F_O^{-1}(\{L\})}
\sum\limits_{1\le n_1<n_2<\cdots<n_m}
\prod\limits_{j=1}^{m}a_{n_j,k_j}
\right).$$
By Lemma \ref{permutesum},
the proof is done.
\end{proof}


\section{Some notable applications in finite-product cases} 
\label{sec:some_notable_applications_in_finite_product_cases}

Consider the form in Equation (\ref{X_k}) whose product part is finite, that is, 
$$\prod\limits_{n=1}^{N}(1+\sum\limits_{k=1}^{\infty} a_{n,k}\lambda^k)
=1+\sum\limits_{k=1}^{\infty} X_k\lambda^k$$
for some $N\in\mathbb{N}$.
This is equivalent to the infinite product in Equation (\ref{X_k}) with $a_{n,k}:=0$ for all $k\in\mathbb{N}$ and all $n > N$.
That is,
\begin{align*}
\prod\limits_{n=1}^{\infty}(1+\sum\limits_{k=1}^{\infty} a_{n,k}\lambda^k)
=&\quad(1+a_{1,1}\lambda+a_{1,2}\lambda^2+a_{1,3}\lambda^3+\cdots)\\[-10pt]
&\cdot\;(1+a_{2,1}\lambda+a_{2,2}\lambda^2+a_{2,3}\lambda^3+\cdots)\\
&\quad\quad\quad\quad\quad\quad
\vdots\\
&\cdot\;(1+a_{N,1}\lambda+a_{N,2}\lambda^2+a_{N,3}\lambda^3+\cdots)\\
&\cdot\;(1+0\lambda+0\lambda^2+0\lambda^3+\cdots)\\
&\cdot\;(1+0\lambda+0\lambda^2+0\lambda^3+\cdots)\\
&\quad\quad\quad\quad\quad\quad
\vdots\\
=&\prod\limits_{n=1}^{N}(1+\sum\limits_{k=1}^{\infty} a_{n,k}\lambda^k)=1+\sum\limits_{k=1}^{\infty} X_k\lambda^k.
\end{align*}

Since this is a finite product,
we can apply the Main Theorem(Theorem \ref{finalcoeff}) regardless of convergences of the forms in $S$,
then the generating set $S$ (Equation (\ref{generating_set})) would be
$$
S:=
\left\{
	\left.
		\sum\limits_{n=1}^{N}\prod\limits_{\alpha =1}^{m}a_{n,k_\alpha }
	\right|
	m \in \mathbb{N} \text{ and } k_\alpha \in \mathbb{N}
\right\}.
$$
Hence, the Main Theorem is not only a formula for ``infinite'' product of power series,
but also a formula for the ``finite'' product of power series.

We provide some low-degree terms by applying the Main Theorem,
\begin{align*}
X_1
&= \mathbb{A}_{e,[1]}= \sum\limits_{n=1}^{N}a_{n,1}\\
X_2
&= \mathbb{A}_{e,[2]} + \frac{1}{2}\left(\mathbb{A}_{e,[1,1]}-\mathbb{A}_{(12),[1,1]}\right)\\
&= \sum\limits_{n=1}^{N}a_{n,2}
+ \frac{1}{2}\left(\sum\limits_{n=1}^{N}a_{n,1}\right)^2
- \frac{1}{2}\sum\limits_{n=1}^{N}a_{n,1}^2\\
X_3
&
= \mathbb{A}_{e,[3]} + \left(\mathbb{A}_{e,[1,2]}-\mathbb{A}_{(12),[1,2]}\right)\\
&
+ \frac{1}{3!}\left(
\mathbb{A}_{e,[1,1,1]}+\mathbb{A}_{(123),[1,1,1]}+\mathbb{A}_{(132),[1,1,1]}
-\mathbb{A}_{(12),[1,1,1]}-\mathbb{A}_{(23),[1,1,1]}-\mathbb{A}_{(13),[1,1,1]}
\right)\\
&
= \mathbb{A}_{e,[3]} + \left(\mathbb{A}_{e,[1,2]}-\mathbb{A}_{(12),[1,2]}\right)
+ \frac{1}{3!}\left(
\mathbb{A}_{e,[1,1,1]}
-3\mathbb{A}_{(12),[1,1,1]}
+2\mathbb{A}_{(123),[1,1,1]}
\right)\\
&
= \sum\limits_{n=1}^{N}a_{n,3}
+ \left(\sum\limits_{n=1}^{N}a_{n,1}\right)\left(\sum\limits_{n=1}^{N}a_{n,2}\right)
- \sum\limits_{n=1}^{N}a_{n,1}a_{n,2}\\
&
+ \frac{1}{6}\left(\sum\limits_{n=1}^{N}a_{n,1}\right)^3
- \frac{1}{2}\left(\sum\limits_{n=1}^{N}a_{n,1}\right)
\left(\sum\limits_{n=1}^{N}a_{n,1}^2\right)
+ \frac{1}{3}\sum\limits_{n=1}^{N}a_{n,1}^3.
\end{align*}

\subsection{Bell polynomial} 
\label{sec:bell_polynomial}

Let $f(\lambda) = 1+\sum\limits_{k=1}^{\infty} a_k \lambda^k$,
and let the $N$-th power of $f(\lambda)$, $(f(\lambda))^N$, to be $1+\sum\limits_{k=1}^{\infty} X_k\lambda^k$, then apply finite product case above to $(f(\lambda))^N$ with $a_{n,k} := a_k$ for $1\le n\le N$, we have
$$
X_k=
\sum\limits_{L \vdash k}
\frac{1}{C_{stb}(L)}
\sum\limits_{\sigma\in S_{\ell(L)}}
sign(\sigma)\mathbb{A}_{\sigma,L}.$$

On the other hand, the $k$-th power of a power series can be expressed by the Bell polynomials, see \cite{Advanced_Combinatorics}.
The \textbf{Ordinary Bell polynomials} are defined as follows,
$$\hat{B}_{n,k}(x_1, x_2,\cdots, x_{n-k+1}):=\sum\frac{k!}{j_1!j_2!\cdots j_{n-k+1}!}x_1^{j_1}x_2^{j_2}\cdots x_{n-k+1}^{j_{n-k+1}}$$
where the sum runs over integers $j_1,j_2,\cdots j_{n-k+1}\ge 0$ satisfying
$$\sum\limits_{i=1}^{n-k+1}j_i = k$$
and
$$\sum\limits_{i=1}^{n-k+1}ij_i = n.$$
Then
$$\left(\sum\limits_{j=1}^{\infty}x_jt^j\right)^k
=\sum\limits_{n=k}^{\infty}
\hat{B}_{n,k}(x_1, x_2,\cdots, x_{n-k+1})t^n.$$

Apply this to our $f(\lambda)$ by calculating 
$(\lambda f(\lambda))^N
= (\lambda (1+\sum\limits_{k=1}^{\infty} a_k \lambda^k))^N
= (\lambda +\sum\limits_{k=1}^{\infty} a_k \lambda^{k+1})^N
= \lambda^N+\sum\limits_{k=1}^{\infty} X_k\lambda^{k+N}
$, and we obtain a new perspective of Ordinary Bell polynomials.
\begin{theorem}
\label{Bell}
Let $k, N\in \mathbb{N}$.
$$\hat{B}_{k+N,N}(a_0, a_1,\cdots, a_k)
=
\sum\limits_{L \vdash k}
\frac{1}{C_{stb}(L)}
\sum\limits_{\sigma\in S_{\ell(L)}}
sign(\sigma)\mathbb{A}_{\sigma,L}(\{a_{n,k}\}_{n,k=1}^\infty),
$$
where $a_0:=1$ and 
$a_{n,k}:=
\begin{cases}
a_k  &n<N\\
0  &\text{otherwise}
\end{cases}$.
\end{theorem}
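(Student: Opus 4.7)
The plan is to match two different computations of the coefficient of $\lambda^{k+N}$ in the power series $(\lambda f(\lambda))^N$: one coming from the Main Theorem applied to $(f(\lambda))^N$, and one coming from the classical Ordinary Bell polynomial expansion quoted just above the theorem.

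First, I would realize $(f(\lambda))^N$ as a finite product of the form treated in Section~\ref{sec:some_notable_applications_in_finite_product_cases}, namely $\prod\limits_{n=1}^{\infty}\bigl(1+\sum\limits_{k=1}^{\infty} a_{n,k}\lambda^k\bigr)$ with $a_{n,k}=a_k$ for the appropriate range of $n$ and $a_{n,k}=0$ otherwise. As remarked in that section, the Main Theorem (Theorem~\ref{finalcoeff}) applies to finite products with no convergence hypothesis on the elements of $S$, so the coefficient $X_k$ in $(f(\lambda))^N = 1+\sum\limits_{k\ge 1} X_k\lambda^k$ is exactly
$$X_k \;=\; \sum\limits_{L \vdash k} \frac{1}{C_{stb}(L)} \sum\limits_{\sigma\in S_{\ell(L)}} sign(\sigma)\,\mathbb{A}_{\sigma,L}(\{a_{n,k}\}_{n,k=1}^\infty),$$
which is precisely the right-hand side of the theorem to be proved.

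Second, I would compute $(\lambda f(\lambda))^N$ via the Bell polynomial identity. Writing $\lambda f(\lambda) = \sum\limits_{j=1}^{\infty} a_{j-1}\lambda^j$ with the convention $a_0:=1$, and applying
$$\left(\sum\limits_{j=1}^{\infty} x_j t^j\right)^N = \sum\limits_{n=N}^{\infty} \hat{B}_{n,N}(x_1,\ldots,x_{n-N+1})\,t^n$$
with $x_j := a_{j-1}$ and $t=\lambda$, I obtain
$$(\lambda f(\lambda))^N \;=\; \sum\limits_{n=N}^{\infty} \hat{B}_{n,N}(a_0,a_1,\ldots,a_{n-N})\,\lambda^n.$$
On the other hand, $(\lambda f(\lambda))^N = \lambda^N(f(\lambda))^N = \lambda^N + \sum\limits_{k\ge 1} X_k\lambda^{k+N}$.

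Third, I would equate the coefficients of $\lambda^{k+N}$ on the two sides: this immediately gives $\hat{B}_{k+N,N}(a_0,a_1,\ldots,a_k)=X_k$, and combining with the formula from the first step yields the claimed identity. The whole argument is essentially a bookkeeping combination of the Main Theorem with a standard generating-function identity, so there is no genuine obstacle; the only point that needs real attention is the index shift $f\leadsto \lambda f$, which is chosen precisely so that the Bell polynomial's last argument comes out to $a_k$ (rather than $a_{k-1}$ or $a_{k+1}$) and the power of $\lambda$ extracted aligns with the degree $k$ being tracked on the Main Theorem side.
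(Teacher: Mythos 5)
Your proposal is correct and is essentially identical to the paper's own argument: the paper likewise obtains $X_k$ from the finite-product case of the Main Theorem applied to $(f(\lambda))^N$, then reads off $\hat{B}_{k+N,N}(a_0,\dots,a_k)=X_k$ by expanding $(\lambda f(\lambda))^N=\lambda^N+\sum_{k\ge1}X_k\lambda^{k+N}$ with the standard Ordinary Bell polynomial identity. The only point worth flagging is the range of $n$: the theorem statement's ``$n<N$'' versus the surrounding text's ``$1\le n\le N$'' is an inconsistency in the paper itself, and your ``appropriate range'' (namely $N$ nonzero factors for the $N$-th power) is the right reading.
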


We can recover Ordinary Bell polynomials for all the cases. In particular,
Theorem \ref{Bell} recovers the Ordinary Bell polynomials $\hat{B}_{k+N,N}(x_0, x_1,\cdots, x_k)$ for $x_0=1$.
For the case $x_0=0$, $\hat{B}_{k+N,N}(0, x_1,\cdots, x_k)=0$ by definition.
For the case $x_0\neq 0, 1$, we use the identity
$\hat{B}_{k+N,N}(x_0, x_1,\cdots, x_k) = x_0^N\hat{B}_{k+N,N}(1, \frac{x_1}{x_0},\cdots, \frac{x_k}{a_0}).$
For $k=0$, $\hat{B}_{N,N}(x_0) = x_0^N$ by definition.

We demonstrate some calculations for low-degree terms of Ordinary Bell polynomials,
\begin{align*}
\hat{B}_{1+N,N}(a_0, a_1)
&= Na_1\\
\hat{B}_{2+N,N}(a_0, a_1, a_2)
&= Na_2
+ \frac{1}{2}(Na_1)^2
- \frac{1}{2}Na_1^2
= Na_2
+ \frac{N(N-1)}{2}a_1^2\\
\hat{B}_{3+N,N}(a_0, a_1, a_2, a_3)
&
= Na_3
+ (Na_1)(Na_2)
- Na_1a_2
+ \frac{1}{6}(Na_1)^3
- \frac{1}{2}(Na_1)
(Na_1^2)
+ \frac{1}{3}Na_1^3\\
&
= Na_3
+ N(N-1)a_1a_2
+ \frac{N(N-1)(N-2)}{6}a_1^3\\
\hat{B}_{4+N,N}(a_0, a_1, a_2, a_3, a_4)
&
= Na_4
+ (Na_1)(Na_3)
- Na_1a_3
+ \frac{1}{2}\left(
(Na_2)^2
- Na_2^2
\right)\\
&
+ \frac{1}{2}\left(
(Na_1)(Na_1)(Na_2)
+ 2(Na_1^2a_2)
- (Na_1^2)(Na_2)
- 2(Na_1)(Na_1a_2)
\right)\\
&
+ \frac{1}{4!}\left(
(Na_1)^4
- 6 (Na_1)^2(Na_1^2)
+ 8 (Na_1)(Na_1^3)
- 6 Na_1^4
+ 3 (Na_1^2)^2
\right)\\
&
= Na_4
+ N(N-1)a_1a_3
+ \frac{N(N-1)}{2}a_2^2\\
&
+ \frac{N(N-1)(N-2)}{2}a_1^2a_2
+ \frac{N(N-1)(N-2)(N-3)}{4!}a_1^4.
\end{align*}

In abstract, we have mentioned that we also recover \textbf{Binomial Theorem} and \textbf{Multinomial Theorem}.
In fact, these two theorems are just the special cases for bell polynomials.

Consider $(1+a_1+\cdots+a_\alpha)^N$ for some fixed $\alpha$. It is easy to see that this is an special case for the bell polynomial case
\[
(1+a_1\lambda +\cdots+a_\alpha\lambda^\alpha+\cdots)^N
=\sum\limits_{k=0}^{\infty} X_k \lambda^k
\]
by substituting the $\lambda$ by $1$ and truncating $a_k =0$ for all $k >\alpha$.

So, we have $(1+a_1+\cdots+a_\alpha)^N=
\sum\limits_{k=0}^{\alpha N} X_k$.
Thus, we have the following as a double counting of the Multinomial Theorem.
\begin{theorem}
\label{Multinomial}
(Alternating expression of Multinomial theorem)
Let $k, N\in \mathbb{N}$.
$$(1+a_1+\cdots+a_\alpha)^N
=
1+\sum\limits_{k=1}^{\alpha N}
\sum\limits_{L \vdash k}
\frac{1}{C_{stb}(L)}
\sum\limits_{\sigma\in S_{\ell(L)}}
sign(\sigma)\mathbb{A}_{\sigma,L}(\{a_{n,k}\}_{n,k=1}^\infty),
$$
where $a_0:=1$ and 
$a_{n,k}:=
\begin{cases}
a_k  &n<N \text{ and } k \le \alpha\\
0  &\text{otherwise}
\end{cases}$.
\end{theorem}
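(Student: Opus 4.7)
The plan is to deduce this theorem as a direct specialization of the finite-product form of the Main Theorem (Theorem \ref{finalcoeff}) discussed at the start of this section. The core observation is that $(1+a_1+\cdots+a_\alpha)^N$ is the value at $\lambda=1$ of a very particular finite product of power series whose coefficients are already described by our formula.

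Concretely, I would first set $g(\lambda) := 1 + a_1\lambda + a_2\lambda^2 + \cdots + a_\alpha\lambda^\alpha$, so that $g(1) = 1 + a_1 + \cdots + a_\alpha$ and the left-hand side equals $g(1)^N$. Second, I would apply the finite-product version of Theorem \ref{finalcoeff} to $g(\lambda)^N = \prod_{n=1}^{N} g(\lambda)$, which is a finite product of power series (in fact, polynomials) with constant term~$1$. Writing $g(\lambda)^N = 1 + \sum_{k \ge 1} X_k \lambda^k$ and taking $a_{n,k} := a_k$ whenever $1 \le n \le N$ and $1 \le k \le \alpha$, together with $a_{n,k} := 0$ otherwise, the Main Theorem yields
$$X_k = \sum_{L \vdash k} \frac{1}{C_{stb}(L)} \sum_{\sigma \in S_{\ell(L)}} sign(\sigma)\, \mathbb{A}_{\sigma, L}(\{a_{n,k}\}_{n,k=1}^\infty).$$
Third, since $g(\lambda)^N$ is a polynomial of degree at most $\alpha N$ in $\lambda$, we have $X_k = 0$ for every $k > \alpha N$, so that $g(\lambda)^N = 1 + \sum_{k=1}^{\alpha N} X_k \lambda^k$. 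Substituting $\lambda = 1$ and inserting the explicit formula for $X_k$ produces exactly the right-hand side of the theorem.

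There is essentially no deep mathematical obstacle here, since the statement is a direct corollary of the already-established results; the bulk of the work was carried out in proving Theorem \ref{finalcoeff} and confirming its validity in the finite-product setting (which, as noted in the introduction to this section, requires no convergence hypothesis on the forms in $S$). The only points requiring a line of care are verifying the finite upper summation bound $\alpha N$, which follows from the polynomial nature of $g(\lambda)^N$, and making sure that the truncation conventions for the array $\{a_{n,k}\}_{n,k=1}^\infty$ exactly match those used inside the definition of $\mathbb{A}_{\sigma, L}$, so that evaluating $\mathbb{A}_{\sigma, L}$ on the specified array correctly restricts the inner index $n$ to the finitely many indices appearing in $g(\lambda)^N$.
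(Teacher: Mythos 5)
Your proposal is correct and follows essentially the same route as the paper: specialize the finite-product form of Theorem \ref{finalcoeff} (equivalently, the Bell-polynomial case) to $g(\lambda)=1+a_1\lambda+\cdots+a_\alpha\lambda^\alpha$, observe that $X_k=0$ for $k>\alpha N$ because $g(\lambda)^N$ is a polynomial of degree $\alpha N$, and evaluate at $\lambda=1$. The only (harmless) discrepancy is that you index the nonzero rows by $1\le n\le N$ while the statement writes $n<N$, inheriting the same off-by-one convention already present in Theorem \ref{Bell}; your reading is the one that actually yields the $N$-th power.
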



\section{Convergence} 
\label{sec:convergence}

Recall that we assume that each power series $1+\sum\limits_{k=1}^{\infty} a_{n,k}\lambda^k$ is analytic
and the infinite product
$\prod\limits_{n=1}^{\infty}(1+\sum\limits_{k=1}^{\infty} a_{n,k}\lambda^k)$ uniformly converges.
In this section, we do \textbf{not} assume the convergence of the forms in $S$, see Equation (\ref{generating_set}).

As the idea in Section \ref{sec:some_notable_applications_in_finite_product_cases}, 
we denote the truncated notation 
$$
\epsilon_{n,k}^{(N)}=
\begin{cases}
1 & n<N\\
0 & \text{otherwise.}
\end{cases}
$$
Rewrite the products to the power series in $\lambda$ again,
$$\prod\limits_{n=1}^{N}(1+\sum\limits_{k=1}^{\infty} a_{n,k}\lambda^k)
=\prod\limits_{n=1}^{\infty}(1+\sum\limits_{k=1}^{\infty} a_{n,k}\epsilon_{n,k}^{(N)}\lambda^k)
=1+\sum\limits_{k=1}^{\infty} X_k^{(N)}\lambda^k,$$
$$\prod\limits_{n=1}^{\infty}(1+\sum\limits_{k=1}^{\infty} a_{n,k}\lambda^k)
=1+\sum\limits_{k=1}^{\infty} X_k\lambda^k.$$
Then we have 
$X_k^{(N)}=
\sum\limits_{L \vdash k}\frac{1}{C_{stb}(L)}
\sum\limits_{\sigma\in S_{\ell(L)}}
sign(\sigma)\mathbb{A}_{\sigma,L}(\{a_{n,k}\epsilon_{n,k}^{(N)}\}_{n,k=1}^\infty)$.
Furthermore, since
$\prod\limits_{n=1}^{N}(1+\sum\limits_{k=1}^{\infty} a_{n,k}\lambda^k)$
uniformly converges to
$\prod\limits_{n=1}^{\infty}(1+\sum\limits_{k=1}^{\infty} a_{n,k}\lambda^k)$,
we get the convergence of the coefficient $\lim_{N\to\infty}X_k^{(N)}=X_k$ for all $k\in\mathbb{N}$.
Thus, we have the following theorem.

\begin{theorem}
\label{convergence_main}
For $k\in\mathbb{N}$.
$$
X_k = \lim_{N\to\infty}\sum\limits_{L \vdash k}
\frac{1}{C_{stb}(L)}
\sum\limits_{\sigma\in S_{\ell(L)}}
sign(\sigma)\mathbb{A}_{\sigma,L}(\{a_{n,k}\epsilon_{n,k}^{(N)}\}_{n,k=1}^\infty).
$$
\end{theorem}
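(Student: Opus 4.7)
The proof essentially reduces to combining the finite-product version of the Main Theorem (Theorem \ref{finalcoeff}) with a standard fact from complex analysis about uniform limits of analytic functions, so the plan follows the sketch already indicated in the paragraph preceding the statement.

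The plan is as follows. First, I would apply the finite-product case developed in Section \ref{sec:some_notable_applications_in_finite_product_cases} to the $N$-th partial product
\[
f_N(\lambda) := \prod_{n=1}^{N}\Bigl(1+\sum_{k=1}^{\infty} a_{n,k}\lambda^k\Bigr)
= \prod_{n=1}^{\infty}\Bigl(1+\sum_{k=1}^{\infty} a_{n,k}\epsilon_{n,k}^{(N)}\lambda^k\Bigr).
\]
Because this is a \emph{finite} product, none of the forms
$\sum_{n=1}^{\infty}\prod_{j=1}^{m}a_{n,k_j}\epsilon_{n,k_j}^{(N)}$
in the generating set pose any convergence question: each such series collapses to a finite sum over $n\le N$. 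Hence Theorem \ref{finalcoeff} applies without restriction and yields the exact identity
\[
X_k^{(N)} = \sum_{L\vdash k}\frac{1}{C_{stb}(L)}\sum_{\sigma\in S_{\ell(L)}} \mathrm{sign}(\sigma)\,\mathbb{A}_{\sigma,L}\bigl(\{a_{n,k}\epsilon_{n,k}^{(N)}\}_{n,k=1}^\infty\bigr),
\]
where $X_k^{(N)}$ is the coefficient of $\lambda^k$ in $f_N(\lambda)$.

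Second, I would pass to the limit $N\to\infty$ using the standing hypothesis that $f_N$ converges uniformly to $f(\lambda) := \prod_{n=1}^{\infty}(1+\sum_{k=1}^\infty a_{n,k}\lambda^k)$ on some disk $|\lambda|\le r$ with $r>0$. The clean way to conclude is Cauchy's coefficient formula: for each fixed $k$,
\[
X_k^{(N)} = \frac{1}{2\pi i}\oint_{|\lambda|=r}\frac{f_N(\lambda)}{\lambda^{k+1}}\,d\lambda,
\qquad
X_k = \frac{1}{2\pi i}\oint_{|\lambda|=r}\frac{f(\lambda)}{\lambda^{k+1}}\,d\lambda,
\]
and the uniform bound $\sup_{|\lambda|=r}|f_N(\lambda)-f(\lambda)|\to 0$ lets us exchange the limit with the contour integral, giving $X_k^{(N)}\to X_k$ as $N\to\infty$. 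Combining the two displayed identities yields the claimed formula.

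The only point requiring care — and what I would flag as the main (though mild) obstacle — is the justification that the standing uniform convergence of the infinite product on a neighborhood of $0$ really does deliver coefficient-wise convergence. That is precisely where the analyticity assumption of the original hypothesis is used: without it, one could not guarantee that the contour of radius $r$ lies inside a common domain of holomorphy. Once this is invoked, the proof is essentially a one-line passage to the limit, and no new combinatorial identity beyond Theorem \ref{finalcoeff} is required.
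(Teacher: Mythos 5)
Your proposal is correct and follows essentially the same route as the paper: apply the finite-product case of Theorem \ref{finalcoeff} to the truncated sequence $\{a_{n,k}\epsilon_{n,k}^{(N)}\}$ to get the exact formula for $X_k^{(N)}$, then use the uniform convergence of the partial products to conclude $X_k^{(N)}\to X_k$. The only difference is that you spell out the coefficient-convergence step via Cauchy's integral formula, which the paper leaves implicit.
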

We provide some low-degree terms by applying Theorem \ref{convergence_main},
\begin{align*}
X_1
&= \lim_{N\to\infty}\sum\limits_{n=1}^{N}a_{n,1}= \sum\limits_{n=1}^{\infty}a_{n,1}\\
X_2
&= \lim_{N\to\infty}\left[\sum\limits_{n=1}^{N}a_{n,2}
+ \frac{1}{2}\left(\sum\limits_{n=1}^{N}a_{n,1}\right)^2
- \frac{1}{2}\sum\limits_{n=1}^{N}a_{n,1}^2\right]\\
X_3
&
= \lim_{N\to\infty}\left[\sum\limits_{n=1}^{N}a_{n,3}
+ \left(\sum\limits_{n=1}^{N}a_{n,1}\right)\left(\sum\limits_{n=1}^{N}a_{n,2}\right)
- \sum\limits_{n=1}^{N}a_{n,1}a_{n,2}\right.\\
&
\left.
+ \frac{1}{6}\left(\sum\limits_{n=1}^{N}a_{n,1}\right)^3
- \frac{1}{2}\left(\sum\limits_{n=1}^{N}a_{n,1}\right)
\left(\sum\limits_{n=1}^{N}a_{n,1}^2\right)
+ \frac{1}{3}\sum\limits_{n=1}^{N}a_{n,1}^3\right].
\end{align*}

\begin{example}
We give an example that the Theorem \ref{finalcoeff} fails, but the Theorem \ref{convergence_main} works.
Let
$$f_n (x):= e^{\frac{(-1)^n}{\sqrt[4]{n}}x}
=1+\frac{(-1)^n}{\sqrt[4]{n}}x+\frac{1}{2\sqrt{n}}x^2+O(x^3)$$
and $M:=\sum\limits_{n=1}^{\infty}\frac{(-1)^n}{\sqrt[4]{n}}\in\mathbb{R}$.
It is easy to see that $\prod\limits_{n=1}^N f_n$ converges to
$$g(x):=e^{Mx}=1+Mx+\frac{M^2}{2}x^2+O(x^3)$$
 as $N\to\infty$.
In this case, the $k$-th coefficient of power series of $f_n$ is
$a_{n,k}=\frac{(-1)^{nk}}{k!\sqrt[4]{n^k}}
$, and the form
$\sum\limits_{n=1}^{\infty}a_{n,2}
=\sum\limits_{n=1}^{\infty}\frac{1}{2\sqrt{n}}$ does not converge.
But
\begin{align*}
X_2
&=\lim_{N\to\infty}\left[\sum\limits_{n=1}^{N}a_{n,2}
+ \frac{1}{2}\left(\sum\limits_{n=1}^{N}a_{n,1}\right)^2
- \frac{1}{2}\sum\limits_{n=1}^{N}a_{n,1}^2\right]\\
&=\lim_{N\to\infty}\left[\sum\limits_{n=1}^{N}\frac{1}{2\sqrt{n}}
+ \frac{1}{2}\left(\sum\limits_{n=1}^{N}a_{n,1}\right)^2
- \frac{1}{2}\sum\limits_{n=1}^{N}\frac{1}{\sqrt{n}}\right]\\
&=\lim_{N\to\infty}
\frac{1}{2}\left(\sum\limits_{n=1}^{N}a_{n,1}\right)^2\\
&=\frac{M^2}{2}
\end{align*}
converges.

\end{example}


\noindent
\addcontentsline{toc}{section}{Acknowledgements}
\textbf{Acknowledgements.}

On the other hand, the authors would like to thank
Chuan-Tsung Chan,
Eugene Z. Xia,
Keng-Hung Steven Lin,
Ông Bín-tsê,
Jun Yi Guo,
and
Chung-Chuan Chen
for their assistance.
Especially, the authors want to thank
Chuan-Tsung Chan,
Eugene Z. Xia,
Keng-Hung Steven Lin
and
Ông Bín-tsê
for their useful suggestions for not only math parts but also non-math parts in the whole article.


\bibliographystyle{abbrv}
\bibliography{INFINITE_PRODUCT_OF_POWER_SERIES} 

\begin{thebibliography}{1}

\bibitem{Bell_polynomials1}
A.~Aboud, J.-P. Bultel, A.~Chouria, J.-G. Luque, and O.~Mallet.
\newblock Bell polynomials in combinatorial hopf algebras.
\newblock {\em Arxiv, http://arxiv.org/abs/1402.2960}, 2014.

\bibitem{WordBell}
A.~Aboud, J.-P. Bultel, A.~Chouria, J.-G. Luque, and O.~Mallet.
\newblock Word bell polynomials.
\newblock {\em Séminaire Lotharingien de Combinatoire}, 2017.

\bibitem{Advanced_Combinatorics}
L.~Comtet.
\newblock Advanced combinatorics: The art of finite and infinite expansions.
\newblock 1974.

\bibitem{Infinite}
A.~Knopfmacher and J.~Knopfmacher.
\newblock Infinite products for power series.
\newblock {\em Journal of Approximation Theory}, 1989.

\bibitem{LS_formula}
S.~Levit and U.~Smilansky.
\newblock A theorem on infinite products of eigenvalues of sturm-liouville type
  operators.
\newblock {\em Proceedings of the American Mathematical Society},
  65(2):299--302, 1977.

\bibitem{Physics}
P.~M. Morse and H.~Feshbach.
\newblock Methods of theoretical physics.
\newblock 1953.

\bibitem{POINCARIE}
H.~POINCARI{\'E}.
\newblock Sur les groupes des {\'e}quations lini{\'e}aires.
\newblock {\em Acta Mathematica}, 1884.

\end{thebibliography}

\begingroup
\global\let\savedifeof=\ifeof
\def\ifeof#1{\global\let\ifeof=\savedifeof\iftrue}%
\begin{filecontents}{INFINITE_PRODUCT_OF_POWER_SERIES.bib}

@article{LS_formula,
      author="Levit, S. and Smilansky, U.",
      title="A Theorem on Infinite Products of Eigenvalues of Sturm-Liouville Type Operators",
      journal="Proceedings of the American Mathematical Society",
      volume="65(2)",
      year="1977",
      pages="299--302"
}
@article{Advanced_Combinatorics,
      author="Louis Comtet",
      title="Advanced Combinatorics: The Art of Finite and Infinite Expansions",
      year="1974"
}
@article{Bell_polynomials1,
      author="Ammar Aboud and Jean-Paul Bultel and Ali Chouria and Jean-Gabriel Luque and Olivier Mallet",
      title="Bell polynomials in combinatorial Hopf algebras",
      year="2014",
      journal="Arxiv, http://arxiv.org/abs/1402.2960"
}
@article{POINCARIE,
      author="H. POINCARI{\'E}",
      title="SUR LES GROUPES DES {\'E}QUATIONS LINI{\'E}AIRES",
      year="1884",
      journal="Acta Mathematica"
}
@article{Physics,
      author="Philip M. Morse and Herman Feshbach",
      title="Methods of Theoretical Physics",
      year="1953",
}
@article{WordBell,
      author="Ammar Aboud and Jean-Paul Bultel and Ali Chouria and Jean-Gabriel Luque and Olivier Mallet",
      title="Word Bell polynomials",
      year="2017",
      journal="Séminaire Lotharingien de Combinatoire"
}
@article{Infinite,
      author="Arnold Knopfmacher and John Knopfmacher",
      title="Infinite Products for Power Series",
      year="1989",
      journal="Journal of Approximation Theory"
}

\end{filecontents}
\endgroup

\vspace{.1in}
\end{document}